\numberwithin{equation}{section}
\newtheorem{thm}{Theorem}[section]
\newtheorem{cor}[thm]{Corollary}
\newtheorem{lem}[thm]{Lemma}
\newtheorem{prop}[thm]{Proposition}
\theoremstyle{definition}
\newtheorem{defn}[thm]{Definition}
\theoremstyle{remark}
\newtheorem{rem}[thm]{Remark}
\numberwithin{equation}{section}
\newcommand\Supp{\operatorname{Supp}}
\newcommand\Ass{\operatorname{Ass}}
\newcommand\mAss{\operatorname{mAss}}
\newcommand\Ann{\operatorname{Ann}}
\newcommand\Spec{\operatorname{Spec}}
\newcommand\Rad{\operatorname{Rad}}
\newcommand\Ht{\operatorname{ht}}
\newcommand\depth{\operatorname{depth}}
\newcommand\D{\operatorname{D}}
\begin{document}\title[Locally unmixed modules and ideal topologies]
{Locally unmixed modules and linearly equivalent ideal topologies}
\author[Mona Bahadorian,  Monireh Sedghi and  Reza Naghipour ]{Mona Bahadorian,  Monireh Sedghi and  Reza Naghipour$^*$ \\\\\\\,}
\vspace*{0.5cm}
\address{Department of Mathematics, Azarbaijan Shahid Madani University, Tabriz, Iran.}
\email{mona.bahadorian@gmail.com}

\address{Department of Mathematics, Azarbaijan Shahid Madani University, Tabriz, Iran.}
\email {m\_sedghi@tabrizu.ac.ir}
\email {sedghi@azaruniv.ac.ir}

\address{Department of Mathematics, University of Tabriz, Tabriz, Iran.}
\email{naghipour@ipm.ir} \email {naghipour@tabrizu.ac.ir}

\thanks{ 2010 {\it Mathematics Subject Classification}: 13A30, 13E05.\\
$^*$Corresponding author: e-mail: {\it naghipour@ipm.ir} (Reza Naghipour)}%
\keywords{Analytic spread, locally unmixed modules, ideal topologies, Rees ring.}
\begin{abstract}
Let $R$ be a commutative Noetherian ring, and let $N$ be a non-zero finitely generated $R$-module. The purpose of this paper is to show that $N$ is locally unmixed if and only if, for any $N$-proper ideal $I$ of $R$ generated by $\Ht_N I$ elements, the topology defined by $(IN)^{(n)}$, $n \geq 0$, is linearly equivalent to the $I$-adic topology.
\end{abstract}
\maketitle
\section{Introduction}
Let $R$ denote a commutative Noetherian ring, $I$ an ideal of $R$ and $N$ a non-zero finitely generated $R$-module. We denote by  $R[It]$ (resp. $R[It, u]$) the {\it graded  ordinary} (resp. {\it extended}) {\it Rees ring} $\oplus _{n \in \mathbb{N}_0} I^n t^n$  (resp. $\oplus _{n \in \mathbb{Z}} I^n t^n$) of $R$  with respect to $I$, where $t$ is an indeterminate and $u=t^{-1}$.  Also, the {\it graded ordinary Rees module} $\oplus _{n \in \mathbb{N}_0} I^n N$ over $R[It]$  (resp. {\it graded extended Rees module} $\oplus _{n \in \mathbb{Z}} I^n N$ over $R[It, u]$)  is denoted by $N[It]$ (resp. $N[It, u]$), which is  finitely generated.  For any multiplicatively closed subset $S$ of $R$, the $n$th $(S)$-symbolic power of $I$ with respect to  $N$, denoted by $S(I^n N)$, is defined to be the union of $I^n N:_N s$ where $s$ varies in $S$. The $I$-adic filtration $\{ I^n N \}_{n \geq 0}$ and the $(S)$-symbolic filtration $\{ S(I^n N) \}_{n \geq 0}$ induce topologies on $N$ which are called the $I$-{\it adic topology} and the $(S)$-{\it symbolic topology}, respectively. These two topologies are said to be {\it linearly equivalent} if, there  is an integer ${k \geq 0}$ such that $S(I^{n+k} N) \subseteq I^n N$ for all integers $n$. In particular, if $S=R\setminus \bigcup \{ \frak p \in \mAss_R N/ IN \}$, where $ \mAss_R N/ IN$ denotes the set of minimal prime ideals of $ \Ass_R N/ IN$, the $n$th $(S)$-symbolic power of $I$ with respect to $N$, is denoted by $(IN)^{(n)}$, and the topology defined by the filtration $\{ (IN)^{(n)} \}_{n \geq 0}$ is called the {\it  symbolic topology}. The purpose of this paper is to show that $N$ is locally unmixed if and only if, for each $N$-proper ideal $I$ that is generated by $\Ht _N I$ elements, the $I$-adic and the symbolic topologies are linearly equivalent.

 P. Schenzel has characterized unmixed local rings \cite[Theorem 7]{sc1} in terms of comparison of the topologies defined by certain filtrations. Also, D. Katz \cite[Theorem 3.5]{ka} and J. Verma \cite[Theorem 5.2]{ve1} have proved a characterization of locally unmixed rings in terms of $s$-ideals. Equivalence of $I$-adic topology and $(S)$-symbolic topology has been studied, in the case $N=R$, in \cite{ka, ra1,sc1,sc2,sc3}, and has led to some interesting results.

Let $\frak p \in \Supp(N)$. Then $N$-height of $\frak p$, denoted by $\Ht_N \frak p$, is defined to be the supremum of lengths of chains of prime ideals of $\Supp(N)$ terminating with $\frak p$. We have $\Ht_N \frak p=\dim _{R_{\frak p}} N_{\frak p}$. We shall say an ideal $I$ of $R$ is $N$-proper if $N/IN \neq 0$, and, when this is the case, we define the $N$-height of $I$ (written $\Ht_N I$) to be
\begin{center}
$\inf \{ \Ht_N \frak p:\, \frak p \in \Supp(N) \cap V(I) \}$
\end{center}
\begin{flushright}
$(=\inf \{ \Ht_N \frak p:\, \frak p \in \Ass_R (N/IN) \}).$
\end{flushright}

If $(R, \frak m)$ is local, then $\widehat{R}$ (resp. $\widehat{N}$) denotes the completion of $R$ (resp. $N$) with respect to the $\frak m$-adic topology. In particular, for any $\frak p \in \Spec (R)$, we denote $\widehat{R_{\frak p}}$ and $\widehat{{N_{\frak p}}}$ the $\frak p R_{\frak p}$-adic completion of $R_{\frak p}$ and $N_{\frak p}$, respectively.
Then $N$ is said to be an {\it unmixed module} if for any $\frak p \in \Ass _{\widehat{R}} \widehat{N}$, $\dim \widehat{R}/\frak p = \dim N$. More generally, if $R$ is not necessarily local and $N$ is non-zero finitely generated, $N$ is a {\it locally unmixed module} if for any $\frak p \in \Supp(N)$, $N_{\frak p}$ is an unmixed $R_{\frak p}$-module.

  As the main result of this paper we characterize the locally unmixed property of a non-zero finitely generated  $R$-module $N$ in terms of the linearly equivalence of the topologies defined by $\{ I^n N \}_{n \geq 0}$ and $\{ (IN)^{(n)} \}_{n \geq 0}$, for certain $N$-proper ideals $I$ of $R$. More precisely we shall show that:
\begin{thm}
Let $R$ be a Noetherian  ring and $N$ a non-zero finitely generated  $R$-module.  Then the following conditions are equivalent:

$\rm(i)$ $N$ is locally unmixed.

$\rm(ii)$ For each $N$-proper ideal $I$ of $R$ that is generated by $\Ht_N I$ elements, the topology given by $\{ (IN)^{(n)} \}_{n \geq 0}$ is linearly equivalent to the $I$-adic topology on $N$.
\end{thm}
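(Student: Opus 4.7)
The plan is to route both directions through the module analogue of the McAdam--Ratliff criterion for comparison of topologies. Namely, attached to any $N$-proper ideal $I$ one has a finite set $\bar{A}^*(I,N)\subseteq \Spec(R)$ of \emph{essential} (or quintasymptotic) primes of $I$ with respect to $N$, and the topology defined by $\{(IN)^{(n)}\}_{n\geq 0}$ is linearly equivalent to the $I$-adic topology on $N$ if and only if $\bar{A}^*(I,N)\subseteq \mAss_R(N/IN)$. Granting this criterion in the module setting, the problem reduces to identifying essential primes of ideals generated by $\Ht_N I$ elements over locally unmixed modules.

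For the implication $\mathrm{(i)}\Rightarrow\mathrm{(ii)}$, fix $I$ generated by $h := \Ht_N I$ elements and take $\frak{p}\in \bar{A}^*(I,N)$. Localizing at $\frak{p}$, the module-theoretic version of McAdam's theorem asserts that in a locally quasi-unmixed (hence in a locally unmixed) module, membership in $\bar{A}^*(I,N)$ forces the analytic spread identity $\ell_{N_{\frak{p}}}(IR_{\frak{p}}) = \dim N_{\frak{p}} = \Ht_N \frak{p}$. On the other hand, $IR_{\frak{p}}$ is generated by at most $h$ elements, so $\ell_{N_{\frak{p}}}(IR_{\frak{p}}) \leq h$; combined with $h = \Ht_N I \leq \Ht_N \frak{p}$, equality throughout yields $\Ht_N I = \Ht_N \frak{p}$. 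Since $N_{\frak{p}}$ is unmixed, the catenary chain condition forbids any strictly larger chain of primes between $I$ and $\frak{p}$ in $\Supp(N)$, so $\frak{p}$ is minimal in $\Supp(N)\cap V(I)$, i.e.\ $\frak{p}\in \mAss_R(N/IN)$. Applying the criterion gives the required linear equivalence.

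For the converse $\mathrm{(ii)}\Rightarrow\mathrm{(i)}$, I argue contrapositively. If $N$ fails to be locally unmixed, pick $\frak{p}_0\in\Supp(N)$ together with $\frak{q}\in \Ass_{\widehat{R_{\frak{p}_0}}}\widehat{N_{\frak{p}_0}}$ such that $\dim \widehat{R_{\frak{p}_0}}/\frak{q} < \dim N_{\frak{p}_0}$. By standard prime-avoidance inside $R_{\frak{p}_0}$, build a system of $h:=\dim\widehat{R_{\frak{p}_0}}/\frak{q}$ elements whose extension to $\widehat{R_{\frak{p}_0}}$ lies inside $\frak{q}$ and is a system of parameters on the quotient component $\widehat{N_{\frak{p}_0}}/\frak{q}\widehat{N_{\frak{p}_0}}$; contract and lift to an ideal $I$ of $R$ with $\Ht_N I = h$ generated by exactly $h$ elements. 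By construction $\frak{p}_0 R_{\frak{p}_0}$ sits in $\bar{A}^*(IR_{\frak{p}_0}, N_{\frak{p}_0})$ (detected after completion via the embedded behaviour of $\frak{q}$), while $\frak{p}_0$ properly contains a prime of $\Supp(N)\cap V(I)$ and so cannot belong to $\mAss_R(N/IN)$. The criterion then rules out linear equivalence for this $I$, contradicting (ii).

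The main technical obstacle is verifying, for the Rees module $N[It]$ and its extended counterpart $N[It,u]$, the two ingredients quoted above: the criterion for linear equivalence in terms of $\bar{A}^*(I,N)$, and the McAdam-type identification of essential primes via analytic spread on a locally (quasi-)unmixed module. Both extend the ring-theoretic results of Ratliff, McAdam, Schenzel, Katz and Verma to the module setting; their proofs proceed by faithfully flat base change to $\widehat{R_{\frak{p}}}$ and a careful analysis of $\Ass$ and $\Min$ on the graded pieces of $N[It]$ and its integral closure. Once these tools are in place, the height-and-generator bookkeeping outlined above closes the argument.
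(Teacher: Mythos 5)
Your proposal rests entirely on a biconditional criterion --- linear equivalence of the $I$-adic and symbolic topologies on $N$ iff $\bar{A}^*(I,N)\subseteq \mAss_R(N/IN)$ --- which you take as given. That criterion is, in effect, the hard content of the problem, not a tool you can borrow and then do ``bookkeeping'' around. The paper itself proves only one direction of (a variant of) it: Theorem 2.13 shows that $E(I,N)=\mAss_R(N/IN)$ \emph{implies} linear equivalence, and this takes real work (Proposition 2.3 on $\Ass$ of Rees modules, Brodmann's finiteness results for ideal transforms via Lemma 2.8 and Propositions 2.9--2.10, Corollary 2.6, and an induction on $\dim N/IN$). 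The converse direction of the criterion is not proved in this paper at all; the implication (ii)$\Rightarrow$(i) is delegated to \cite[Theorem 3.2]{na}. So when you write that the ``main technical obstacle'' is verifying the criterion and then wave at faithfully flat base change, you are not sketching a proof --- you are restating the problem. You should also be careful that $\bar{A}^*(I)$ (asymptotic primes via integral closure) and $E(I,N)$ (essential primes via quintessential primes of $uR[It,u]$ on the extended Rees module) are not the same set in general; the comparison-of-topologies criterion in McAdam's and Schenzel's work is phrased with the essential primes, and conflating the two would need justification.

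Your argument for (i)$\Rightarrow$(ii), granting the criterion and a module version of McAdam's analytic-spread theorem, is a legitimate alternative to the paper's route: you get $\ell_{N_{\frak p}}(IR_{\frak p})\le h=\Ht_N I\le \Ht_N\frak p$ and McAdam forces equality, and then $\Ht_N I=\Ht_N\frak p$ alone already gives minimality (you do not even need catenarity for that last step --- any proper inclusion of primes in $\Supp(N)$ strictly drops $N$-height). The paper instead shows $E(I,N)=\mAss_R(N/IN)$ by producing a system of generators of $I$ forming an essential sequence on $N$ and invoking \cite[Corollary 3.11]{na} and \cite[Lemma 3.8]{na}; your analytic-spread bookkeeping is a genuinely different (and arguably cleaner) reduction, but it leans on an unproved module McAdam theorem. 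Your (ii)$\Rightarrow$(i) sketch, by contrast, has a real gap: ``build a system of $h$ elements whose extension to $\widehat{R_{\frak p_0}}$ lies inside $\frak q$ and is a system of parameters on $\widehat{N_{\frak p_0}}/\frak q\widehat{N_{\frak p_0}}$; contract and lift to an ideal $I$ of $R$ with $\Ht_N I=h$'' is not a construction. Elements of $R$ (or $R_{\frak p_0}$) mapping into $\frak q$ lie in $\frak q\cap R_{\frak p_0}$, and there is no reason that $\Ht_N$ of the ideal they generate equals $h=\dim\widehat{R_{\frak p_0}}/\frak q$, nor that $\frak p_0$ will land in the relevant essential/asymptotic prime set for that ideal. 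Producing a witness ideal generated by exactly $\Ht_N I$ elements that breaks the topology comparison is precisely what \cite[Theorem 3.2]{na} accomplishes, and its proof is not a matter of prime avoidance.
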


One of our tools for proving Theorem {\rm1.1} is the following, which plays a key role in this paper.  Recall that a prime ideal $\frak{p}$ of $R$ is called
a {\it quitessential prime ideal of }$I$ with respect to $N$  precisely when there exists
$\frak{q}\in \Ass_{\widehat{R}_\frak{p}}\widehat{N}_\frak{p}$ such that
$\Rad(I\widehat{R}_\frak{p}+ \frak{q})= \frak{p}\widehat{R}_\frak{p}$. The set of
quitessential primes of $I$ is denoted by $Q(I, N)$. Then,  the set of  {\it essential primes  of} $I$ with respect to $N$, denoted by $E(I, N)$, is defined to be the
set $\{\frak{q}\cap R\mid\, \frak{q}\in Q(uR[It, u],  N[It, u])\}$.  \\

\begin{thm}
Let $R$ denote a Noetherian ring, $N$ a non-zero finitely generated $R$-module and $I$ a $N$-proper ideal of $R$ such that $E(I,N)= \mAss _R N/IN$. Then, the $I$-adic topology $\{ I^n N \}_{n \geq 0}$ and the topology defined by $\{ (I^n N)^{(n)} \}_{n \geq 0}$ are linearly equivalent.
\end{thm}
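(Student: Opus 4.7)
The plan is to reduce the assertion to the following general criterion from the asymptotic theory of ideal topologies: for any set of primes $\mathcal{P} \supseteq \mAss_R N/IN$, with $S := R \setminus \bigcup_{\mathfrak{p} \in \mathcal{P}} \mathfrak{p}$, the filtrations $\{I^n N\}_{n \geq 0}$ and $\{S(I^n N)\}_{n \geq 0}$ define linearly equivalent topologies on $N$ if and only if $E(I, N) \subseteq \mathcal{P}$. Once this criterion is in hand, specialising to $\mathcal{P} := \mAss_R N/IN$ gives $S(I^n N) = (IN)^{(n)}$, and the hypothesis $E(I, N) = \mAss_R N/IN$ makes $E(I, N) \subseteq \mathcal{P}$ automatic; the criterion then delivers the conclusion of Theorem 1.2.

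To establish the criterion I would work in the extended Rees ring $R^* := R[It, u]$ together with the extended Rees module $N^* := N[It, u]$. By the definition of $E(I, N)$, the contractions to $R$ of the quintessential primes in $Q(uR^*, N^*)$ are precisely the essential primes. Because $Q(uR^*, N^*)$ is finite, the associated primes of $N^*/u^m N^*$ stabilise for large $m$, and the $u$-adic filtration $\{u^m N^*\}$ on $N^*$ mirrors the $I$-adic filtration on $N$ via the grading. A standard computation, relying on the principal-ideal theorem applied to $u \in R^*$ and a primary decomposition of $u^m N^*$ in $N^*$, then identifies the contracted primes as the exact obstruction to inclusions of the form $S(I^{n+k} N) \subseteq I^n N$: such inclusions can be achieved uniformly in $n$ if and only if every stable associated prime of $N^*/u^m N^*$ contracts to a prime contained in some member of $\mathcal{P}$. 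Since essential primes are themselves prime, this is equivalent to $E(I, N) \subseteq \mathcal{P}$.

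The main obstacle will be making the translation between the symbolic filtration $\{S(I^n N)\}$ on $N$ and the $u$-adic filtration $\{u^m N^*\}$ on $N^*$ fully rigorous, and in particular verifying that the failure of linear equivalence corresponds \emph{exactly} to a contracted quintessential prime of $u$ avoiding $S$. The subtlety is that one direction of the correspondence requires a uniform Artin--Rees-type control over all $n$, while the other needs the stability of $\Ass_{R^*}(N^*/u^m N^*)$ and a careful passage to the $\mathfrak{p}\widehat{R}_{\mathfrak{p}}$-adic completion built into the definition of $Q$. Once this correspondence is cleanly set up, the equivalence between linear equivalence of topologies and the containment $E(I, N) \subseteq \mathcal{P}$ is a formal consequence, and the specialisation above yields Theorem 1.2.
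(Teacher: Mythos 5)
There is a genuine gap in your sketch, and it is located exactly where you acknowledge the ``main obstacle'' but then leave it unresolved. Your argument conflates the stable set of associated primes $A^*(uR[It,u],\,N[It,u])$ (which is indeed what a primary decomposition of $u^m N[It,u]$ together with the principal ideal theorem will give you) with the set of quintessential primes $Q(uR[It,u],\,N[It,u])$ whose contractions define $E(I,N)$. These are different sets: a quintessential prime is detected only after localising and passing to the $\mathfrak{p}\widehat{R}_{\mathfrak{p}}$-adic completion, and this completion step is where the real content of the theorem lies. A primary decomposition over $R[It,u]$ does not see it. (There is also a smaller slip: the correct condition is that each $\mathfrak{q}\in E(I,N)$ be \emph{contained in} some member of $\mathcal{P}$, so that $\mathfrak{q}\cap S=\emptyset$ --- not that $\mathfrak{q}\in\mathcal{P}$. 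These happen to coincide when $\mathcal{P}=\mAss_R N/IN$, but your general criterion as stated is not quite right.)

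More fundamentally, you have no replacement for the finiteness input that the paper uses to obtain the uniform Artin--Rees-type bound. In the paper's proof of Theorem 2.13, the hypothesis $E(I,N)=\mAss_R N/IN$ is converted, via McAdam's results and Ahn's module-theoretic versions, into the analytic-spread inequality $l(I\widehat{R_{\mathfrak q}}+z/z)<\dim\widehat{R_{\mathfrak q}}/z$. That inequality feeds Schenzel's finiteness theorem for ideal transforms (through Proposition 2.10) to show that $\bigoplus_{n\geq 0} \D_{\mathfrak m}(I^n N)$ is a finite $R[It]$-module, which is what makes Corollary 2.6 and hence the uniform containment $I^{n+s}N:_N\langle\mathfrak m\rangle\subseteq I^n N$ possible; an induction on $\dim N/IN$ together with Lemma 2.12 then glues the local bounds. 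Your sketch contains neither an analogue of the analytic-spread bound nor an analogue of the ideal-transform finiteness, and ``a standard computation relying on the principal-ideal theorem'' will not supply them. If your plan is to cite a pre-existing general criterion rather than prove it, you should identify the precise reference and verify that it is stated for modules (the ring versions in Schenzel and McAdam do not automatically transfer); absent that, the criterion needs a proof, and the one you outline does not close.
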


The proof of Theorem {\rm1.2} is given in {\rm1.13}.\\

Throughout this paper, $R$ will always be a commutative Noetherian
ring with non-zero identity, $N$ will be a non-zero finitely
generated $R$-module, and $I$ will be an $N$-proper ideal of $R$,
i.e., $N/IN \neq0$. For each $R$-module $L$, we denote by $\mAss_RL$
the set of minimal primes of $\Ass_RL$.    For
any ideal $J$ of $R$, {\it the radical of} $J$, denoted by
$\Rad(J)$, is defined to be the set $\{x\in R \,: \, x^n \in J$ for
some $n \in \mathbb{N}\}$. For any unexplained notation and
terminology we refer the reader to \cite{bh} or \cite{n}.

\section{The Results}
 The main result of this section is to show that  a non-zero finitely generated module $N$ over a Noetherian ring $R$  is locally unmixed if and only if,  for any $N$-proper ideal $I$ of $R$ that can be generated by $\Ht_N I$ elements,  the topologies defined by $\{ I^n N \}_{n \geq 0}$ and $\{ (IN)^{(n)} \}_{n \geq 0}$, on $N$, are linearly equivalent.  We begin with the following remark.

\begin{rem}
Let $R$ be a Noetherian ring and $N$ a finitely generated  $R$-module.  For a submodule $M$ of $N$ and an ideal $I$ of $R$, the increasing sequence of submodules
\begin{center}
$M \subseteq M:_N I \subseteq M:_N I^2 \subseteq \cdots \subseteq M:_N I^n \subseteq \cdots$
\end{center}
becomes stationary. Denote its ultimate constant value by $M:_N \langle I \rangle$. Note that $M:_N \langle I \rangle= M:_N I^n$ for all large $n$. Let
$$M=Q_1 \cap \cdots \cap Q_r \cap Q_{r+1} \cap \cdots \cap Q_s$$ be an irredundant primary decomposition of $M$, with $I \subseteq \Rad (Q_i :_R N)$, exclusively for $r+1 \leq i \leq s$. Then, from the definition, it easily follows that $M:_N \langle I \rangle = Q_1 \cap\dots \cap Q_r$. Therefore
\begin{center}
$\Ass_R {N}/{(M:_N \langle I \rangle)} = \{ \frak p \in \Ass_R {N}/{M•} : I \nsubseteq \frak p \} = \Ass_R ({N}/{M•}) \setminus V(I)$.
\end{center}
\end{rem}

Now we can state and prove the following lemma. Here $D_I(L)$ denotes the ideal transform of the $R$-module $L$ with respect to an ideal $I$
of $R$ (see \cite[2.2.1]{bs}).
\begin{lem}
Let $(R, \frak m)$ be local (Noetherian) ring, $I$ an ideal of $R$ and $N$ a non-zero finitely generated $R$-module  such that $\depth N > 0$. Then, for all integers $n\geq 0$, we have $$I^n N:_N \langle{\frak m}\rangle \subseteq \D_{\frak m} (I^n N).$$
\end{lem}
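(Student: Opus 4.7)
The plan is to exploit the ideal-transform realization $\D_{\frak m}(L)=\varinjlim_{k\geq 0}\Hom_R(\frak m^k, L)$ together with the fundamental exact sequence
$$0\to \Gamma_{\frak m}(L)\to L\to \D_{\frak m}(L)\to H^1_{\frak m}(L)\to 0,$$
and to interpret the asserted containment inside the common overmodule $\D_{\frak m}(N)$.

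The depth hypothesis $\depth N>0$ forces $\Gamma_{\frak m}(N)=0$, and since $I^n N\subseteq N$ one also has $\Gamma_{\frak m}(I^n N)=0$. The fundamental sequence therefore makes the canonical maps $N\hookrightarrow \D_{\frak m}(N)$ and $I^n N\hookrightarrow \D_{\frak m}(I^n N)$ injective. Next, applying $\Hom_R(\frak m^k,-)$ to the inclusion $I^n N\hookrightarrow N$ and passing to the direct limit (which is exact) produces a further injection $\D_{\frak m}(I^n N)\hookrightarrow \D_{\frak m}(N)$. These embeddings identify $N$ and $\D_{\frak m}(I^n N)$ with submodules of $\D_{\frak m}(N)$, and in this setting the claim $I^n N :_N\langle\frak m\rangle\subseteq \D_{\frak m}(I^n N)$ becomes a genuine set-theoretic inclusion.

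To conclude, take $x\in I^n N:_N\langle\frak m\rangle$. By Remark 2.1 there exists $k\geq 0$ with $\frak m^k x\subseteq I^n N$, so the multiplication-by-$x$ map $m_x\colon\frak m^k\to N$, $a\mapsto ax$, factors through a homomorphism $\tilde m_x\colon\frak m^k\to I^n N$ followed by the inclusion $I^n N\hookrightarrow N$. The class $[m_x]$ in $\varinjlim \Hom_R(\frak m^k,N)=\D_{\frak m}(N)$ coincides with the image of $x$ under $N\hookrightarrow \D_{\frak m}(N)$, while by construction it equals the image of $[\tilde m_x]\in\D_{\frak m}(I^n N)$ under $\D_{\frak m}(I^n N)\hookrightarrow\D_{\frak m}(N)$. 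Hence $x\in\D_{\frak m}(I^n N)$, as required. The main technical point is to arrange the three objects $I^n N:_N\langle\frak m\rangle$, $N$, and $\D_{\frak m}(I^n N)$ compatibly inside $\D_{\frak m}(N)$; this is precisely where the hypothesis $\depth N>0$ enters.
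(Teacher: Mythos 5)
Your argument is correct and is essentially an explicit unwinding of the citation the paper relies on: the hypothesis $\depth N>0$ gives $\Gamma_{\frak m}(I^nN)=0$, and then \cite[Corollary 2.2.18]{bs} (which you re-derive via the direct-limit description of $\D_{\frak m}$ and the naturality of $\eta$) embeds any module lying between $I^nN$ and a $\frak m$-torsion extension of it into $\D_{\frak m}(I^nN)$. So the approach matches the paper's; you have simply spelled out the reference.
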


\begin{proof}
The assertion follows  from \cite[Corollary 2.2.18]{bs} and the fact that $\depth I^n N > 0$ for all integers $n\geq 0$.
\end{proof}

The next result concerns the associated prime ideals of the Rees module $ N[It]$ for a non-zero finitely generated module $N$ over a Noetherian ring
$R$ and an ideal $I$ in $R$.
\begin{prop}
Let $R$ be a Noetherian ring, $I$ an ideal of $R$ and $N$ a non-zero finitely generated $R$-module. Then
\begin{center}
$\Ass_{R[It]} N[It] = \{ \oplus_{n\geq 0} (I^n \cap \frak p):\, \frak p \in \Ass_R N\}$.
\end{center}
\end{prop}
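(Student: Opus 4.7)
The plan is to establish both inclusions by exploiting the $\mathbb{N}$-graded structure of $R[It]=\oplus_{n\geq 0} I^nt^n$ and $N[It]=\oplus_{n\geq 0} I^nN$. For the inclusion ``$\supseteq$'', I would pick $\frak{p}\in\Ass_R N$ and write $\frak{p}=\Ann_R(x)$ for some $x\in N$. Regarding $x$ as a homogeneous element of degree zero in $N[It]$, I compute $\Ann_{R[It]}(x)$ directly: a homogeneous element $at^n$ of $R[It]$ annihilates $x$ precisely when $ax=0$ in $N$, i.e.\ when $a\in\frak{p}\cap I^n$. Hence $\Ann_{R[It]}(x)=\oplus_{n\geq 0}(I^n\cap\frak{p})t^n$, and it remains to see this ideal is prime. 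For that, I would observe that reduction modulo $\frak{p}$ induces a surjective ring homomorphism $R[It]\to (R/\frak{p})[\bar{I}t]$ (where $\bar{I}$ is the image of $I$ in $R/\frak{p}$) whose kernel is exactly $\oplus_{n\geq 0}(I^n\cap\frak{p})t^n$; since the target is a subring of the polynomial ring $(R/\frak{p})[t]$ over a domain, it is itself a domain, so the ideal is prime and therefore lies in $\Ass_{R[It]} N[It]$.

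For the reverse inclusion ``$\subseteq$'', I would take $P\in\Ass_{R[It]} N[It]$ and invoke the standard fact that associated primes of a finitely generated graded module over an $\mathbb{N}$-graded Noetherian ring are themselves graded and realized as annihilators of homogeneous elements. This supplies $P=\Ann_{R[It]}(y)$ with $y=xt^k$ for some $x\in I^kN$. Setting $\frak{p}:=P\cap R$, I first check that $\frak{p}\in\Ass_R N$: for $a\in R$, the relation $a\cdot xt^k=(ax)t^k=0$ in $N[It]$ forces $ax=0$ in $N$, so $P\cap R=\Ann_R(x)$, and primality of $P$ then forces $\Ann_R(x)$ to be prime. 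To conclude that $P=\oplus_{n\geq 0}(I^n\cap\frak{p})t^n$, I argue degreewise: a homogeneous $bt^m\in R[It]$ lies in $P$ iff $(bx)t^{m+k}=0$ in $N[It]$ iff $b\in\Ann_R(x)=\frak{p}$, i.e.\ iff $bt^m\in(I^m\cap\frak{p})t^m$.

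The only nontrivial external ingredient is the graded-$\Ass$ fact cited above; everything else is bookkeeping with the grading. The mildest technical point will be keeping the notation consistent between $I^n$ and $I^nt^n$ when writing down annihilators, and noticing that the degree $k$ of the homogeneous generator $y$ plays no role in the final description of $P$, because the shift $t^k$ introduces no extra relations on $\Ann_R(x)$ inside $N$. Primality of $\oplus_{n\geq 0}(I^n\cap\frak{p})t^n$ is handled cleanly by exhibiting it as the kernel of a map into a subring of a polynomial ring over a domain, which avoids a direct element-by-element primality check and is, in my view, the cleanest step of the argument.
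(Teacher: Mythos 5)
Your proposal is correct and follows essentially the same route as the paper: reduce to homogeneous annihilators via the standard graded-$\Ass$ fact, compute $\Ann_{R[It]}(x)=\oplus_{n\geq 0}(I^n\cap\frak{p})t^n$ degreewise, and verify primality through the isomorphism $R[It]/\frak{q}\cong (R/\frak{p})\bigl[(I+\frak{p}/\frak{p})t\bigr]$, a domain. The only difference is cosmetic: you spell out the degree bookkeeping and the role of the shift $t^k$ more explicitly than the paper does.
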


\begin{proof}
Let $ \frak q \in \Ass_{R[It]} N[It]$. Then in view of \cite[Lemma 1.5.6]{bh} there exists a homogenous element $x$ of $N[It]$ such that
$\frak q= \Ann_{R[It]} x$. Suppose that $x \in I^vN$ for some integer $v \geq 0$. Then we have
\begin{center}
$\frak q = (0:_{R[It]} x)= \oplus _{n \geq 0} (0:_R x) \cap I^n$.
\end{center}

Now, it is easy to see that $\frak p := (0:_R x)$ is a prime ideal of $R$ and so $\frak p \in \Ass_R N$. Hence $\frak q = \oplus_{n\geq 0} (I^n \cap \frak p)$ for some $\frak p \in \Ass_R N$. Conversely, let $\frak p \in \Ass_R N$ and $\frak p = (0:_R x)$ for an element $x \in N$. Then
\begin{center}
$\frak q:=(0:_{R[It]} x)=\oplus_{n\geq 0} (I^n \cap \frak p)$
\end{center}
is a prime ideal of $\Ass_{R[It]} N[It]$,  because $R[It] / \frak q \cong R/{\frak p}[(I+\frak p /\frak p)t]$ is a domain.
\end{proof}

\begin{defn}
Let $R$ be a Noetherian ring and $N$ an $R$-module. A decreasing sequence $\{N_n\}_{n \geq 0}$ of submodules of $N$ is called a {\it filtration} of $N$. If $I$ is an ideal of $R$, then the filtration $\{N_n\}_{n \geq 0}$ is called $I$-{\it filtration} whenever $IN_n \subseteq N_{n+1}$ for all integers $n \geq 0$.
\end{defn}
\begin{lem}
Let $R$ be a Noetherian ring, $I$ an ideal of $R$ and $N$ an $R$-module. Let $\{N_n\}_{n \geq 0}$ be an $I$-filtration of submodules of $N$ such that the ordinary  Rees module $N[It]$ is finitely generated over $R[It]$. Then there exists an integer $k$ such that $N_{n+k}=I^n N_k$, for all integers $n\geq 0$.
\end{lem}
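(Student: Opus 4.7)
The plan is to prove this by a standard graded-module argument. Set $M := \bigoplus_{n \geq 0} N_n$, which is a graded module over $R[It] = \bigoplus_{n \geq 0} I^n t^n$: indeed, the $I$-filtration property $I N_n \subseteq N_{n+1}$ ensures that multiplication by a homogeneous element $a t^i \in I^i t^i$ sends the degree-$n$ piece $N_n$ into the degree-$(n+i)$ piece $N_{n+i}$, so the scalar action is well-defined and graded. The hypothesis identifies $M$ with the finitely generated Rees-type module in the lemma, so I may choose a finite set of homogeneous generators $m_1, \dots, m_r \in M$ with $m_i \in N_{d_i}$, and set $k := \max\{d_1, \dots, d_r\}$.

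Next, I would show $N_{n+k} \subseteq I^n N_k$ for every $n \geq 0$. An arbitrary element $x \in N_{n+k}$, viewed as a homogeneous element of $M$ of degree $n+k$, can be written as $x = \sum_{i=1}^{r} a_i m_i$ with $a_i \in R[It]$ homogeneous of degree $n+k-d_i$; that is, $a_i \in I^{n+k-d_i}$. Since $d_i \leq k$, we can factor $I^{n+k-d_i} = I^n \cdot I^{k-d_i}$, giving
\[
a_i m_i \in I^n \bigl( I^{k-d_i} N_{d_i} \bigr).
\]
Iterating the $I$-filtration inclusion $I N_j \subseteq N_{j+1}$ a total of $k-d_i$ times yields $I^{k-d_i} N_{d_i} \subseteq N_k$, and hence $a_i m_i \in I^n N_k$. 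Summing, $x \in I^n N_k$.

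For the reverse inclusion, iterating the $I$-filtration property gives $I^n N_k \subseteq N_{n+k}$ directly. Combining the two inclusions yields $N_{n+k} = I^n N_k$ for all $n \geq 0$, as required.

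The argument is entirely routine; the only real content is the translation of the finite generation hypothesis into the existence of a uniform bound $k$ on the degrees of generators, after which the $I$-filtration property does the work in both directions. No genuine obstacle arises, so I do not expect any step to require technical machinery beyond the definition of a graded module over $R[It]$.
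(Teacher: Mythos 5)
Your proof is correct and gives the standard Artin--Rees style argument: choose finitely many homogeneous generators of the graded $R[It]$-module $M = \bigoplus_{n \geq 0} N_n$, let $k$ be the maximum of their degrees, and then the factorization $I^{n+k-d_i} = I^n \cdot I^{k-d_i}$ together with repeated use of $IN_j \subseteq N_{j+1}$ gives $N_{n+k} \subseteq I^n N_k$, while the reverse inclusion is immediate from the $I$-filtration property. The paper does not actually spell out a proof here; it simply cites Lemma 2.5.4 of Enochs--Jenda, so your write-up supplies the self-contained argument that the citation is standing in for rather than taking a genuinely different route. One small remark worth making: the lemma's statement writes ``the ordinary Rees module $N[It]$,'' which according to the paper's own definition is $\bigoplus_{n\geq 0} I^n N$ and does not involve the filtration $\{N_n\}$ at all; as Corollary 2.6 makes clear, the intended hypothesis is that $\bigoplus_{n\geq 0} N_n$ is finitely generated over $R[It]$, which is exactly the reading you (correctly) adopted.
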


\begin{proof}
The result follows easily  from \cite[Lemma 2.5.4]{et}.
\end{proof}

\begin{cor}
Let $(R, \frak m)$ be a local (Noetherian) ring and $I$ an ideal of $R$. Let $N$ be an $R$-module and set $N_n = I^n N:_N \langle\frak m\rangle$  for each integer $n \geq 0$. Suppose that the module $\oplus _{n\geq 0} N_n$ is finitely generated over the ordinary Rees ring $R[It]$. Then there is an integer $k$ such that $I^{n+k}N :_N  \langle\frak m \rangle \subseteq I^n N$, for all integer $n \geq 0$.
\end{cor}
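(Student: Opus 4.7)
The plan is to apply the preceding lemma directly to the filtration $\{N_n\}_{n \geq 0}$, so the work reduces to verifying that $\{N_n\}$ is a decreasing $I$-filtration of submodules of $N$ and then reading off the desired containment from the conclusion of that lemma.

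First, I would check that $\{N_n\}$ is decreasing: since $I^{n+1} N \subseteq I^n N$, any $x$ with $\frak m^s x \subseteq I^{n+1} N$ also satisfies $\frak m^s x \subseteq I^n N$, so $N_{n+1} \subseteq N_n$. Next, $\{N_n\}$ is an $I$-filtration: for $x \in N_n$ with $\frak m^s x \subseteq I^n N$ and $a \in I$, one has $\frak m^s (ax) = a(\frak m^s x) \subseteq a I^n N \subseteq I^{n+1} N$, so $ax \in N_{n+1}$, and hence $I N_n \subseteq N_{n+1}$.

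With the hypothesis that $\bigoplus_{n \geq 0} N_n$ is finitely generated over $R[It]$, the preceding lemma supplies an integer $k \geq 0$ with $N_{n+k} = I^n N_k$ for every $n \geq 0$. Since $N_k$ is a submodule of $N$, one trivially has $I^n N_k \subseteq I^n N$, so
$$ I^{n+k} N :_N \langle \frak m \rangle \;=\; N_{n+k} \;=\; I^n N_k \;\subseteq\; I^n N, $$
which is the stated conclusion.

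There is essentially no obstacle here: the corollary is an immediate specialization of the preceding lemma to the particular filtration $\{I^n N :_N \langle \frak m \rangle\}_{n \geq 0}$. The only step requiring even a line of care is the $I$-filtration property, which is handled by the multiplication displayed in the second paragraph; everything else is formal.
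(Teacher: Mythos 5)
Your proof is correct and follows exactly the same route as the paper: verify that $\{I^n N :_N \langle \frak m \rangle\}_{n\ge 0}$ is an $I$-filtration and invoke Lemma 2.5 to obtain $N_{n+k} = I^n N_k \subseteq I^n N$. The paper's proof is just a terser version of the same argument, recording only the key containment $I(I^n N :_N \langle \frak m \rangle) \subseteq I^{n+1}N :_N \langle \frak m \rangle$ before citing the lemma.
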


\begin{proof}
As $I(I^n N :_N \langle \frak m \rangle) \subseteq I^{n+1} N:_N \langle\frak m\rangle$, for all integers $n \geq 0$, the claim follows from Lemma {\rm2.5}.
\end{proof}

\begin{defn}
Let $(R, \frak m)$ be a local (Noetherian) ring, $I$ an ideal of $R$ and $N$ an $R$-module. We define the $R$-module $\D(I,N)$ as the following:
\begin{center}
$\D(I,N):= \bigoplus _{n\geq 0} \D_{\frak m}(I^n N)$.
\end{center}
\end{defn}
As $\D_{\frak m}(.)$ is an $R$-linear and left exact functor, it follows that $\{ \D_{\frak m} (I^n N) \}_{n\geq 0}$ is a decreasing sequence and $I\D_{\frak m} (I^n N) \subseteq \D_{\frak m}(I^{n+1} N)$ for all integers $n \geq 0$. Hence $\D(I,N)$ is an $R[It]$-module, by Lemma {\rm2.5}.\\

\begin{lem}
Let $R$ be a Noetherian ring, $I$ an ideal of $R$ and $N$ a finitely generated $R$-module. Then the following conditions are equivalent:

$\rm(i)$ $D_I (N)$ is a finitely generated $R$-module.

$\rm(ii)$ For all $\frak p \in \Ass_R N$, the $R/ \frak p$-module $\D_{I(R/\frak p)} (R/\frak p)$ is finitely generated.
\end{lem}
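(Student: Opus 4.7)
My plan is to reduce to the coprimary case via primary decomposition, and then push $\D_I$-finiteness up a filtration by powers of the relevant associated prime. The main tool throughout is the fundamental exact sequence
\[0 \to \Gamma_I(L) \to L \to \D_I(L) \to H^1_I(L) \to 0\]
(cf.\ \cite[2.2.4]{bs}), which for $L$ finitely generated over the Noetherian ring $R$ implies that $\D_I(L)$ is finitely generated if and only if $H^1_I(L)$ is. Coupled with the derived-functor long exact sequence for local cohomology, this yields a \emph{two-out-of-three principle} for $\D_I$: in any short exact sequence $0 \to A \to B \to C \to 0$ of finitely generated $R$-modules, finite generation of any two of $\D_I(A), \D_I(B), \D_I(C)$ forces that of the third. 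The ``middle'' and ``left'' cases are immediate from the left-exact sequence $0 \to \D_I(A) \to \D_I(B) \to \D_I(C)$; the ``quotient'' case uses the longer piece $\D_I(B) \to \D_I(C) \to H^1_I(A)$, noting that the cokernel of $\D_I(B) \to \D_I(C)$ embeds in $H^1_I(A)$, which is finitely generated by the equivalence above.

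The implication $\rm(i) \Rightarrow (ii)$ is then immediate. For each $\frak p \in \Ass_R N$, the embedding $R/\frak p \hookrightarrow N$ and left-exactness of $\D_I$ yield an injection $\D_I(R/\frak p) \hookrightarrow \D_I(N)$; the target is finitely generated over Noetherian $R$, hence so is $\D_I(R/\frak p) = \D_{I(R/\frak p)}(R/\frak p)$.

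For $\rm(ii) \Rightarrow (i)$, take an irredundant primary decomposition $0 = \bigcap_{i=1}^r Q_i$ of the zero submodule of $N$, with $Q_i$ being $\frak p_i$-primary so that $\Ass_R N = \{\frak p_1, \ldots, \frak p_r\}$. The canonical injection $N \hookrightarrow \bigoplus_i N/Q_i$ together with left-exactness and additivity of $\D_I$ reduces the task to showing each $\D_I(N/Q_i)$ is finitely generated. Since $N/Q_i$ is $\frak p_i$-coprimary, $\frak p_i^n (N/Q_i) = 0$ for some $n$; filtering $N/Q_i$ by the submodules $\frak p_i^{\,j}(N/Q_i)$, whose successive quotients are finitely generated $R/\frak p_i$-modules, and iterating the ``middle'' case of the two-out-of-three principle, we reduce further to showing $\D_I(L)$ is finitely generated for any finitely generated $R/\frak p_i$-module $L$.

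For this last reduction, choose a surjection $(R/\frak p_i)^k \twoheadrightarrow L$ with kernel $K$. Then $\D_I((R/\frak p_i)^k) \cong \D_I(R/\frak p_i)^k$ is finitely generated by the hypothesis of (ii), and $\D_I(K) \hookrightarrow \D_I((R/\frak p_i)^k)$ is finitely generated as a submodule of a finitely generated module over Noetherian $R$. The ``quotient'' two-out-of-three principle applied to $0 \to K \to (R/\frak p_i)^k \to L \to 0$ then yields $\D_I(L)$ finitely generated, completing the proof. I expect this last step to be the sole technical obstacle, since it is the only place that truly exploits the $H^1_I$-bridge in the fundamental exact sequence; without it one cannot bound the cokernel of $\D_I(B) \to \D_I(C)$.
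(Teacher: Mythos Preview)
Your argument for (i)$\Rightarrow$(ii) and the reduction in (ii)$\Rightarrow$(i) via primary decomposition and the $\frak p_i$-adic filtration are fine; the paper itself gives no proof here but simply cites \cite[Lemma 3.3]{br2}, so you are supplying content the paper omits. However, there is a genuine gap in your ``quotient'' case of the two-out-of-three principle, and it bites exactly where you predicted: the last step.

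The connecting map in the long exact sequence of the derived functors of $\D_I$ does \emph{not} land in $H^1_I(A)$. Since $\D_I=\varinjlim\Hom_R(I^n,-)$ is left exact and $R^i\D_I\cong H^{i+1}_I$ for $i\geq 1$ (cf.\ \cite[2.2.6]{bs}), the sequence attached to $0\to A\to B\to C\to 0$ reads
\[
0\to \D_I(A)\to \D_I(B)\to \D_I(C)\to H^2_I(A)\to H^2_I(B)\to\cdots,
\]
so the cokernel of $\D_I(B)\to\D_I(C)$ embeds in $H^2_I(A)$, not $H^1_I(A)$. Finite generation of $\D_I(A)$ is equivalent to that of $H^1_I(A)$, but gives no control whatsoever over $H^2_I(A)$. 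Hence your deduction that $\D_I(L)$ is finitely generated from the presentation $0\to K\to (R/\frak p_i)^k\to L\to 0$ is unjustified: you only know $H^1_I(K)$ is finitely generated, while what is needed is a bound on (the relevant piece of) $H^2_I(K)$.

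This is not a cosmetic slip; without the quotient case your filtration strategy stalls, because the successive quotients $\frak p_i^{\,j}(N/Q_i)/\frak p_i^{\,j+1}(N/Q_i)$ are arbitrary finitely generated $R/\frak p_i$-modules, not submodules of free ones. One repair is to combine your embedding idea with Noetherian induction on the support: write such a quotient $L$ as an extension of its torsion-free part (which embeds in a finite free $R/\frak p_i$-module, so is handled by left-exactness) by its $R/\frak p_i$-torsion submodule, whose support is strictly smaller. Brodmann's original argument in \cite{br2} proceeds along different structural lines; it is worth consulting, since the ``two-out-of-three for $\D_I$'' heuristic, as stated, is not valid in the quotient direction.
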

\begin{proof}
See \cite[Lemma 3.3]{br2}.
\end{proof}

\begin{prop}
Let $(R, \frak m)$ be a local (Noetherian) ring, $I$ an ideal of $R$ and $N$ a finitely generated $R$-module. Then the following conditions are equivalent:

$\rm(i)$ $\D(I,N)$ is a finitely generated $R[It]$-module.

$\rm(ii)$ For all $\frak p \in \Ass_R N$, the module $\oplus_{n \geq 0} \D_{\frak m}(I^n+\frak p/ \frak p)$ is finitely generated over the Rees ring
 $R/\frak p[(I+\frak p / \frak p)t]$.
\end{prop}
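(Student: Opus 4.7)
The plan is to apply Lemma~2.9 to the graded Noetherian Rees ring $R[It]$, the ideal $\frak{m}R[It]$, and the finitely generated graded $R[It]$-module $N[It]$, and then translate both the hypothesis and the conclusion back to the original data by means of Proposition~2.3.

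The first step is to observe that, since $\frak{m}R[It]$ is a homogeneous ideal of $R[It]$ whose generators live in degree zero, the ideal transform $\D_{\frak{m}R[It]}$ respects the grading on $N[It]=\bigoplus_{n\geq 0}I^nN$, so that
$$
\D_{\frak{m}R[It]}\bigl(N[It]\bigr) \;=\; \bigoplus_{n\geq 0}\D_{\frak{m}}\bigl(I^nN\bigr) \;=\; \D(I,N).
$$
Consequently, condition~(i) is the assertion that $\D_{\frak{m}R[It]}(N[It])$ is finitely generated as an $R[It]$-module.

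By Lemma~2.9 applied inside $R[It]$, this is equivalent to the statement that for every $\frak{Q}\in\Ass_{R[It]}N[It]$ the $R[It]/\frak{Q}$-module $\D_{\frak{m}R[It]/\frak{Q}}\bigl(R[It]/\frak{Q}\bigr)$ is finitely generated. Proposition~2.3 identifies such primes as $\frak{Q}_{\frak p}:=\bigoplus_{n\geq 0}(I^n\cap\frak p)$ indexed by $\frak p\in\Ass_RN$, and supplies a graded isomorphism
$$
R[It]/\frak{Q}_{\frak p}\;\cong\;(R/\frak p)\!\left[\tfrac{I+\frak p}{\frak p}\,t\right].
$$
Since $R$ is local, $\frak p\subseteq\frak m$, so this isomorphism sends $\frak{m}R[It]$ onto the degree-zero ideal generated by $\frak m/\frak p$. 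Applying the grading-commutation of the ideal transform a second time, the $n$-th homogeneous component of $\D_{\frak{m}R[It]/\frak{Q}_{\frak p}}(R[It]/\frak{Q}_{\frak p})$ is $\D_{\frak m}\bigl((I^n+\frak p)/\frak p\bigr)$, whence
$$
\D_{\frak{m}R[It]/\frak{Q}_{\frak p}}\bigl(R[It]/\frak{Q}_{\frak p}\bigr)\;\cong\;\bigoplus_{n\geq 0}\D_{\frak m}\!\left(\tfrac{I^n+\frak p}{\frak p}\right),
$$
and finite generation of this object over $(R/\frak p)[(I+\frak p/\frak p)t]$ is exactly condition~(ii).

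The main obstacle is justifying cleanly the two uses of the identity ``ideal transform commutes with the grading when the defining ideal is generated in degree zero''. This is the ideal-transform counterpart of the standard fact for graded local cohomology, extractable from the four-term exact sequence
$$
0\to\Gamma_{\frak a}(M)\to M\to\D_{\frak a}(M)\to H^1_{\frak a}(M)\to 0,
$$
but it deserves an explicit reference (e.g.\ to Brodmann--Sharp). Once this is in place, both directions of the equivalence follow at once from Lemma~2.9 and Proposition~2.3, with no further computation required.
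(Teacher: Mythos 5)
Your proposal is correct and follows essentially the same route as the paper: both reduce to the lemma on finite generation of ideal transforms modulo associated primes (Lemma~2.8, which you cite as~2.9) applied to the Rees ring $R[It]$, the degree-zero ideal $\frak{m}R[It]$, and the module $N[It]$, and both use Proposition~2.3 together with the isomorphism $R[It]/\frak{q} \cong (R/\frak p)[(I+\frak p/\frak p)t]$ to translate the condition on associated primes of $N[It]$ into condition~(ii). The only difference is cosmetic: you invoke the lemma once as a biconditional, while the paper spells out the two implications separately; the grading-commutation of $\D$ that you flag as needing justification is likewise used (without further comment) in the paper.
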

\begin{proof}
In order to prove the implication $\rm(i)$ $\Longrightarrow$ $\rm(ii)$, suppose that $\frak p \in \Ass _R N$. Then in view of Proposition {\rm2.3}, there exists $\frak q \in \Ass_{R[It]} (\oplus_{n \geq 0} I^n N)$ such that $\frak q= \oplus_{n \geq 0} (I^n \cap \frak p)$. Since
$$\D(I,N)\cong \D_{\frak m} (\oplus_{n \geq 0} I^n N) \cong \D_{\frak m R[It]} (\oplus_{n \geq 0} I^n N)),$$ is a  finitely generated  $R[It]$-module, it follows from Lemma {\rm2.8} that the $R[It] / \frak q$-module $\D_{\frak m ({R[It]}/{\frak q})} (R[It] / \frak q)$ is finitely generated. Now, as
\begin{center}
$R[It] / \frak q \cong  R/\frak p[(I+\frak p /\frak p)t],$  and
$\D_{\frak m ({R[It]}/{\frak q})}({R[It]}/{•\frak q}) \cong \oplus_{n \geq 0} \D_{\frak m} ({I^n+ \frak p}/{\frak p•}),$
\end{center}
 we deduce that the $R/\frak p[({I+ \frak p}/{\frak p•})t]$-module
$\oplus_{n \geq 0} \D_{\frak m} ({I^n+ \frak p}/{\frak p•})$ is finitely generated.

Now, we show the conclusion $\rm(ii)$ $\Longrightarrow$ $\rm(i)$. To do this end, let $\frak q \in \Ass_{R[It]} N[It]$. Then, by virtue of Proposition {\rm2.3}, there exists $\frak p \in \Ass_R N$ such that $\frak q= \oplus_{n \geq 0} (I^n \cap \frak p)$. Since
\begin{center}
 ${R[It]}/{\frak q•} \cong R/\frak p[({I+ \frak p}/{\frak p•})t]$ and $\D_{\frak m ({R[It]}/{\frak q})} ({R[It]}/{\frak q•}) \cong \oplus_{n \geq 0} \D_{\frak m} ({I^n+ \frak p}/{\frak p•})$,
 \end{center}
  it follows from Lemma {\rm2.8} that the $R[It]$-module $\D_{\frak m R[It]}(N[It])$ is finitely generated, and so the $R[It]$-module $\oplus_{n \geq 0} \D_{\frak m} (I^n N)$ is finitely generated, as required.
\end{proof}

  The next proposition gives us a criterion for the finiteness of $R[It]$-module $D_{\frak m R[It]} (N[It])$, whenever $(R, \frak m)$ is a local ring and $N$ is a
  finitely generated module over $R$. To this end, let us, firstly, recall the important notion {\it analytic spread
of $I$ with respect to $N$}, over a local ring $(R, \frak{m}$),
introduced by Brodmann in \cite{B2}:
$$l(I,N):= \dim N[It]/ (\frak{m},u)N[It],$$ in
the case $N=R$, $l(I,N)$ is the classical analytic spread $l(I)$
of $I$, introduced by Northcott and Rees (see \cite{NR}).

\begin{prop}
Let $(R, \frak m)$ be a local (Noetherian) ring and $I$ an ideal of $R$. Let $N$ be a finitely generated $R$-module such that $l({I\widehat{R}+ \frak p}/{\frak p}) < \dim {\widehat{R}}/{\frak p•}$ for all $\frak p \in \Ass_{\widehat{R}} \widehat{N}$. Then the $R[It]$-module $D_{\frak m R[It]} (N[It])$ is finitely generated, and $\depth N > 0$.
\end{prop}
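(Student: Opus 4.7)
My strategy is two-fold. First I settle the depth assertion by a direct argument, then I use Proposition {\rm 2.9} to reduce the finiteness claim to a statement over each quotient domain $R/\frak p$ with $\frak p \in \Ass_R N$, which is dispatched by a classical analytic-spread criterion for the finite generation of the ideal transform.

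For $\depth N > 0$, suppose toward a contradiction that $\frak m \in \Ass_R N$. Then $R/\frak m$ embeds in $N$; applying the (flat) completion functor gives $\widehat R/\frak m\widehat R \hookrightarrow \widehat N$, and since this quotient is a field, $\frak m\widehat R \in \Ass_{\widehat R}\widehat N$. But $\dim \widehat R/\frak m\widehat R = 0$ and $(I\widehat R + \frak m\widehat R)/\frak m\widehat R$ is the zero ideal in that field, so $l((I\widehat R + \frak m\widehat R)/\frak m\widehat R) = 0$, contradicting the strict inequality $l < \dim$ at $\frak m\widehat R$.

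For finite generation, observe that $\D_{\frak m R[It]}(N[It]) \cong \D(I,N) = \oplus_{n\geq 0}\D_{\frak m}(I^n N)$, via the isomorphism already used at the start of the proof of Proposition {\rm 2.9}. By that proposition it suffices to prove, for each $\frak p \in \Ass_R N$, that $\oplus_{n\geq 0}\D_{\frak m}((I^n+\frak p)/\frak p)$ is finitely generated over $R/\frak p[((I+\frak p)/\frak p)t]$. Fix such a $\frak p$ and set $\bar R = R/\frak p$, $\bar{\frak m} = \frak m/\frak p$, $\bar I = (I+\frak p)/\frak p$, so $\widehat{\bar R} \cong \widehat R/\frak p\widehat R$. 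The embedding $R/\frak p \hookrightarrow N$ given by $\frak p \in \Ass_R N$ yields $\widehat R/\frak p\widehat R \hookrightarrow \widehat N$, whence every $\hat{\frak q} \in \Ass_{\widehat{\bar R}}\widehat{\bar R}$, viewed as a prime of $\widehat R$ containing $\frak p\widehat R$, lies in $\Ass_{\widehat R}\widehat N$. Via the canonical isomorphism $\widehat R/\hat{\frak q} \cong \widehat{\bar R}/(\hat{\frak q}/\frak p\widehat R)$, the proposition's hypothesis descends to $l((\bar I\widehat{\bar R}+\hat{\frak q}')/\hat{\frak q}') < \dim \widehat{\bar R}/\hat{\frak q}'$ for every $\hat{\frak q}' \in \Ass_{\widehat{\bar R}}\widehat{\bar R}$.

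One then invokes the classical Brodmann/McAdam-type criterion: in a Noetherian local ring $(\bar R, \bar{\frak m})$, if the analytic-spread inequality above holds at every $\hat{\frak q}' \in \Ass_{\widehat{\bar R}}\widehat{\bar R}$, then $\bar{\frak m}$ is not a quintessential prime of $\bar I$, and $\oplus_{n\geq 0}\D_{\bar{\frak m}}(\bar I^n)$ is finitely generated over $\bar R[\bar I t]$. The main obstacle I anticipate is the descent step: one must verify carefully the bijection between $\Ass_{\widehat{\bar R}}\widehat{\bar R}$ and the primes of $\Ass_{\widehat R}\widehat N$ containing $\frak p\widehat R$ supplied by the embedding $R/\frak p \hookrightarrow N$, and check that analytic spread and Krull dimension both behave correctly under the quotient isomorphism above.
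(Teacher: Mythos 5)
Your depth argument is correct and is more explicit than the paper, which merely asserts $\depth N>0$; the observation that $\frak m\in\Ass_R N$ would force $\frak m\widehat R\in\Ass_{\widehat R}\widehat N$ with $l((I\widehat R+\frak m\widehat R)/\frak m\widehat R)=0=\dim\widehat R/\frak m\widehat R$ is exactly the right contradiction.

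For the finiteness assertion, however, you and the paper diverge at the very first move, and your arrangement leaves a genuine gap. The paper first tensors with $\widehat R[(I\widehat R)t]$ and uses faithful flatness to reduce to showing $\oplus_{n\ge0}\D_{\frak m\widehat R[(I\widehat R)t]}(I^n\widehat N)$ is finitely generated, and \emph{only then} applies Proposition 2.9 --- over $\widehat R$. The point of this ordering is that Proposition 2.9 then hands you quotient rings $\widehat R/\frak p$ with $\frak p\in\Ass_{\widehat R}\widehat N$, which are already \emph{complete} local domains, so Schenzel's criterion \cite[Proposition]{sc1} applies directly using the hypothesis $l(I\widehat R+\frak p/\frak p)<\dim\widehat R/\frak p$ with no further ado. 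You instead apply Proposition 2.9 first, over $R$, and land on the non-complete domains $\bar R=R/\frak p$ with $\frak p\in\Ass_R N$. Your descent of the analytic-spread inequality to the primes $\hat{\frak q}'\in\Ass_{\widehat{\bar R}}\widehat{\bar R}$ via the embedding $\widehat R/\frak p\widehat R\hookrightarrow\widehat N$ is fine as far as it goes, but the final step --- invoking a ``classical Brodmann/McAdam-type criterion'' in the form ``analytic-spread inequality at every $\hat{\frak q}'\in\Ass_{\widehat{\bar R}}\widehat{\bar R}$ implies $\oplus_n\D_{\bar{\frak m}}(\bar I^n)$ is finitely generated over $\bar R[\bar I t]$'' --- is precisely the $N=\bar R$ case of the proposition you are proving, and no reference is supplied for it in that non-complete generality. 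To justify that step you would have to complete $\bar R$, apply Proposition 2.9 over $\widehat{\bar R}$, and invoke Schenzel over the resulting complete domains; that is, you would reproduce the paper's argument, just postponed and hidden inside your black box. So while nothing you write is false, the crucial finiteness criterion needs either a precise citation valid for non-complete local domains or the explicit completion-plus-faithful-flatness reduction that the paper carries out at the outset.
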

\begin{proof}
It is easy to see that
\begin{center}
$D_{\frak m R[It]} (N[It]) \otimes _{R[It]} \widehat{R}[(I\widehat{R})t] \cong D_{\frak m \widehat{R}[(I\widehat{R})t]} (\oplus_{n \geq 0} I^n \widehat{N})$,
\end{center}
and so by faithfully flatness of $\widehat{R}[(I\widehat{R})t]$ over $R[It]$, it is enough for us to show that the $\widehat{R}[(I\widehat{R})t]$-module $\oplus _{n \geq 0} \D_{\frak m\widehat{R}[(I\widehat{R})t]} (I^n \widehat{N})$ is finitely generated. In order to do this, in view of Proposition {\rm2.9}, it is enough to show that $\oplus_{n\geq 0} \D_{\frak m} (I^n \widehat{R} + \frak p/\frak p)$ is finitely generated over $\widehat{R}/\frak p[(I\widehat{R}+ \frak p/\frak p)t]$ for all $\frak p \in \Ass_{\widehat{R}} \widehat{N}$. But this follows easily from \cite[Proposition]{sc1} and the assumption $l(I\widehat{R}+ \frak p/\frak p) < \dim \widehat{R}/\frak p$.
\end{proof}

\begin{rem}
Before bringing the next result we fix a notation, which is employed by  P. Schenzel in \cite{sc2} in the case $N=R$. Let $S$ be a multiplicatively closed subset of a Noetherian ring $R$. For a submodule $M$ of a finitely generated $R$-module $N$, we use $S(M)$ to denote the submodule $\bigcup_{s \in S} (M:_{N} s)$. Note that the primary decomposition of $S(M)$ consists of the intersection of all primary components of $M$ whose associated prime ideals do not meet $S$. In other words
\begin{center}
$\Ass _{R} {N}/{S(M)}= \{ \frak p \in \Ass _{R} N/M: \frak p \cap S = \emptyset \}$.
\end{center}

In particular, if $S=R \setminus \bigcup \{ \frak p \in \mAss_R {N}/{IN} \}$, then for any $n \in \mathbb{N}$, $S(I^n N)$ is denoted by $(IN)^{(n)}$, where $I$ is an ideal of $R$.
\end{rem}

The following lemma is needed in the proof of Theorem 2.13.

\begin{lem}
Let $R$ be a Noetherian ring and $N$ an $R$-module. Let $M$ and $L$ be two submodules of $R$ such that $M_{\frak p} \subseteq L_{\frak p}$ for all $\frak p \in \Ass _R {N}/{L•}$. Then $M \subseteq L$.
\end{lem}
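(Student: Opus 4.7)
The plan is to recast the desired inclusion $M\subseteq L$ as the vanishing of the submodule $K:=(M+L)/L$ of $N/L$, and then apply the standard local-global principle for zero-ness via associated primes. (I am reading the statement as: $M$ and $L$ are submodules of $N$, the ``$R$'' in the statement being a typo.)

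First, I would observe that $M\subseteq L$ if and only if $K=0$. For any prime $\mathfrak{p}$ of $R$, flatness of localization yields
\begin{equation*}
K_{\mathfrak p} \;=\; (M_{\mathfrak p}+L_{\mathfrak p})/L_{\mathfrak p},
\end{equation*}
so $K_{\mathfrak p}=0$ is equivalent to $M_{\mathfrak p}\subseteq L_{\mathfrak p}$. In particular, the hypothesis gives $K_{\mathfrak p}=0$ for every $\mathfrak p \in \Ass_R(N/L)$.

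Next, since $K$ embeds in $N/L$, the usual containment $\Ass_R K \subseteq \Ass_R(N/L)$ holds. Combined with the previous step, this yields $K_{\mathfrak p}=0$ for every $\mathfrak p \in \Ass_R K$. On the other hand, whenever $\mathfrak p\in \Ass_R K$, one has $\mathfrak p R_{\mathfrak p} \in \Ass_{R_{\mathfrak p}} K_{\mathfrak p}$, which in particular forces $K_{\mathfrak p}\neq 0$. The only way to reconcile these two facts is that $\Ass_R K$ is empty.

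Finally, because $R$ is Noetherian, any nonzero $R$-module admits at least one associated prime. Hence $\Ass_R K=\emptyset$ implies $K=0$, and we conclude $M\subseteq L$. There is no real obstacle here: the argument is a textbook application of associated primes and localization, and the hypothesis is exactly what is required for the standard local-global criterion to apply.
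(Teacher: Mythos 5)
Your proof is correct and is precisely the same argument as the paper's, merely written out in full: the paper's one-line proof cites exactly the inclusion $\Ass_R\bigl((M+L)/L\bigr)\subseteq \Ass_R(N/L)$ that you identify, and leaves the reader to supply the routine localization and ``$\Ass=\emptyset\Rightarrow$ module is zero'' steps that you spell out. (You are also right that ``submodules of $R$'' is a typo for ``submodules of $N$''.)
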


\begin{proof}
The assertion follows from the fact that $\Ass _R ({M+L}/{L•}) \subseteq \Ass_R {N}/{L•}$.
\end{proof}

Following, we investigate a fundamental characterization for  linearly equivalence between the $I$-adic and symbolic topologies on a
 finitely generated $R$-module $N$,  for certain  ideal $I$ of $R$.
 This result plays a key role in the proof of the main theorem.

To this end, recall that,  in \cite{ra2}, L.J. Ratliff, Jr., (resp. in \cite{br1} Brodmann)  introduced the interesting set of
associated primes $\bar{A^*}(I):= \Ass_R R/(I^n)_a $  (resp.  $A^* (I, N):= \Ass_R {N}/{I^n N}$),  for large $n$.
Here $I_a$ denotes the integral closure of $I$ in $R$, i.e., $I_a$ is the ideal of $R$ consisting of all
elements $x\in R$ which satisfy  an equation $x^n+ r_1x^{n-1}+\cdots + r_n= 0$, where $r_i\in I^i, i=1, \ldots, n$.

Moreover, recall that  a local ring $(R, \frak{m})$   is said to be a
{\it quasi-unmixed ring} if for every $\frak{p}\in \mAss \widehat{R}$, the condition $\dim \widehat{R}/\frak{p}= \dim R$ is satisfied.

\begin{thm}
Let $R$ be a Noetherian ring, $I$ an ideal of $R$ and let $N$ be a finitely generated  $R$-module such that $E(I,N)=\mAss _R {N}/{IN•}$. Then, the $I$-adic topology, $\{ I^n N \}_{n \geq 0}$ and the topology defined by the filtration $\{( I N)^{(n)} \}_{n \geq 0}$ are linearly equivalent.

\end{thm}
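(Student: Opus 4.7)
Since $I^n N \subseteq (IN)^{(n)}$ is immediate, the task is to produce $k \ge 0$ with $(IN)^{(n+k)} \subseteq I^n N$ for every $n$. The plan is a local-to-global reduction via Lemma~2.12, combined with the finite-generation machinery of Proposition~2.10 and the bound supplied by Corollary~2.6. By Brodmann's theorem the set $A^*(I,N) = \bigcup_n \Ass_R N/I^n N$ is finite, and Lemma~2.12 reduces the claim to verifying $((IN)^{(n+k)})_{\mathfrak{p}} \subseteq (I^n N)_{\mathfrak{p}}$ at each $\mathfrak{p} \in A^*(I,N)$. If $\mathfrak{p} \in \mAss_R N/IN$, then the multiplicatively closed set $S = R \setminus \bigcup_{\mathfrak{q} \in \mAss_R N/IN}\mathfrak{q}$ satisfies $S \cap \mathfrak{p} = \emptyset$, whence $((IN)^{(n+k)})_{\mathfrak{p}} = I^{n+k} N_{\mathfrak{p}} \subseteq I^n N_{\mathfrak{p}}$ trivially, so the real work concerns $\mathfrak{p} \in A^*(I,N) \setminus \mAss_R N/IN$.

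For such $\mathfrak{p}$, I would localize to the local ring $(R_{\mathfrak{p}}, \mathfrak{p} R_{\mathfrak{p}})$; the hypothesis descends to $E(IR_{\mathfrak{p}}, N_{\mathfrak{p}}) = \mAss_{R_{\mathfrak{p}}} N_{\mathfrak{p}}/IN_{\mathfrak{p}}$, with $\mathfrak{p} R_{\mathfrak{p}} \notin E(IR_{\mathfrak{p}}, N_{\mathfrak{p}})$. Unpacking the definition of $E(\cdot,\cdot)$ through quintessential primes of $uR[It,u]$ in the completion of $N[It,u]$, in the spirit of Schenzel~\cite{sc1}, this translates into the analytic spread inequality $l((I\widehat R_{\mathfrak{p}} + \mathfrak{P})/\mathfrak{P}) < \dim \widehat R_{\mathfrak{p}}/\mathfrak{P}$ for every $\mathfrak{P} \in \Ass_{\widehat R_{\mathfrak{p}}}\widehat N_{\mathfrak{p}}$. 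Proposition~2.10 then yields that the $R_{\mathfrak{p}}[IR_{\mathfrak{p}}t]$-module $\bigoplus_n \D_{\mathfrak{p} R_{\mathfrak{p}}}(I^n N_{\mathfrak{p}})$ is finitely generated and $\depth N_{\mathfrak{p}} > 0$. Applying Lemma~2.2 and the noetherianity of $R_{\mathfrak{p}}[IR_{\mathfrak{p}} t]$, the graded submodule $\bigoplus_n (I^n N_{\mathfrak{p}}:_{N_{\mathfrak{p}}}\langle \mathfrak{p} R_{\mathfrak{p}}\rangle)$ is finitely generated, and Corollary~2.6 supplies $k_{\mathfrak{p}}$ such that $I^{n+k_{\mathfrak{p}}} N_{\mathfrak{p}}:_{N_{\mathfrak{p}}}\langle \mathfrak{p} R_{\mathfrak{p}}\rangle \subseteq I^n N_{\mathfrak{p}}$ for every $n \ge 0$.

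The final step is to identify $((IN)^{(n)})_{\mathfrak{p}}$ with $I^n N_{\mathfrak{p}}:_{N_{\mathfrak{p}}}\langle \mathfrak{p} R_{\mathfrak{p}}\rangle$ for $n \gg 0$: the essential prime hypothesis, together with the Ratliff--Schenzel-type correspondence between $E(I,N)$ and the asymptotic associated primes, forces every associated prime of $I^n N$ strictly between a minimal prime of $N/IN$ and $\mathfrak{p}$ to belong to $\mAss_R N/IN$. This collapses the symbolic power's primary decomposition to coincide with the $\mathfrak{p} R_{\mathfrak{p}}$-saturation; combined with the bound from Corollary~2.6 it gives $((IN)^{(n+k_{\mathfrak{p}})})_{\mathfrak{p}} \subseteq I^n N_{\mathfrak{p}}$. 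Taking $k = \max_{\mathfrak{p} \in A^*(I,N)} k_{\mathfrak{p}}$ over the finite set completes the argument. The principal obstacle I expect is the translation of $E(I,N) = \mAss_R N/IN$ into the analytic spread inequality on the completion required by Proposition~2.10, which demands careful bookkeeping of quintessential primes in the extended Rees module $\widehat{N[It,u]}_{\mathfrak{P}}$; a secondary delicate point is the identification $((IN)^{(n)})_{\mathfrak{p}} = I^n N_{\mathfrak{p}}:_{N_{\mathfrak{p}}}\langle \mathfrak{p} R_{\mathfrak{p}}\rangle$, which rests on the hypothesis controlling the set of asymptotic embedded primes of the powers of $I$.
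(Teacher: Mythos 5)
The opening moves match the paper: localize at the finite set $A^*(I,N)$, observe the claim is automatic at minimal primes of $N/IN$, and at $\mathfrak p \in A^*(I,N)\setminus\mAss_R N/IN$ translate $\mathfrak p\notin E(I,N)$ (via McAdam/Schenzel) into the analytic-spread inequality $l((I\widehat R_{\mathfrak p}+\mathfrak P)/\mathfrak P)<\dim\widehat R_{\mathfrak p}/\mathfrak P$ for each $\mathfrak P\in\Ass_{\widehat R_{\mathfrak p}}\widehat N_{\mathfrak p}$, then feed this into Proposition~2.10, Lemma~2.2 and Corollary~2.6 to get $I^{n+k_{\mathfrak p}}N_{\mathfrak p}:_{N_{\mathfrak p}}\langle\mathfrak p R_{\mathfrak p}\rangle\subseteq I^nN_{\mathfrak p}$. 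That chain is exactly the paper's.

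The gap is in your final step. You assert that $((IN)^{(n)})_{\mathfrak p}$ coincides with the $\mathfrak p R_{\mathfrak p}$-saturation $I^nN_{\mathfrak p}:_{N_{\mathfrak p}}\langle\mathfrak p R_{\mathfrak p}\rangle$ for $n\gg 0$, justified by the claim that the hypothesis $E(I,N)=\mAss_R N/IN$ ``forces every associated prime of $I^nN$ strictly between a minimal prime of $N/IN$ and $\mathfrak p$ to belong to $\mAss_R N/IN$.'' That claim is false: such intermediate primes, being strictly larger than a minimal prime, cannot themselves be minimal, so the statement actually asserts there are none — and nothing in the hypothesis gives that. $E(I,N)\subseteq A^*(I,N)$ in general, but $E(I,N)=\mAss_R N/IN$ does \emph{not} imply $A^*(I,N)=\mAss_R N/IN$; there can certainly be non-minimal, non-maximal asymptotic primes $\mathfrak q\in A^*(I,N)$ with $\mathfrak q\subsetneq\mathfrak p$. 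After localizing at $\mathfrak p$, the symbolic power $((IN)^{(n)})_{\mathfrak p}$ is the intersection of primary components of $I^nN_{\mathfrak p}$ at minimal primes only, whereas the saturation $I^nN_{\mathfrak p}:_{N_{\mathfrak p}}\langle\mathfrak p R_{\mathfrak p}\rangle$ keeps \emph{all} components at primes properly contained in $\mathfrak p R_{\mathfrak p}$, including these intermediate ones. The two modules differ in general.

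This is exactly the point the paper's proof handles by an induction on $d=\dim N_{\mathfrak p}/IN_{\mathfrak p}$: when $d=1$ there are no intermediate primes and your identification is correct; when $d>1$, the inductive hypothesis supplies, for each non-maximal $\mathfrak q\in A^*(I,N)$ with $\mathfrak q\subsetneq\mathfrak p$, an integer $k_{\mathfrak q}$ with $((IN)^{(n+k_{\mathfrak q})})_{\mathfrak q}\subseteq(I^nN)_{\mathfrak q}$, and Lemma~2.12 then upgrades this localized information to $(IN)^{(n+k)}\subseteq I^nN:_N\langle\mathfrak p R_{\mathfrak p}\rangle$, after which Corollary~2.6 finishes. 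Without that induction (or some substitute handling intermediate primes), your argument does not close.
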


\begin{proof}
Let $\frak q \in A^* (I,N) \setminus \mAss_R N/IN$ and let $z \in \Ass_{\widehat{R_{\frak q}}} \widehat{N_{\frak q}}$. Then, by assumption, $\frak q \notin E(I,N)$. Hence, in view of \cite[Lemma 3.2]{Ah}, $\frak q R_{\frak q} \notin E(IR_{\frak q}, N_{\frak q})$, and so it follows from \cite[Proposition 3.6]{Ah} that $\frak q \widehat{R_{\frak q}}/z \notin E(I\widehat{R_{\frak q}}+z / z)$. Thus by virtue of \cite[Lemma 2.1]{mc2}, $\frak q \widehat{R_{\frak q}}/z \notin \bar{A^*}(I\widehat{R_{\frak q}}+z/z)$. As $\widehat{R_{\frak q}}/z$ is quasi-unmixed, it follows from McAdam's result \cite[Proposition 4.1]{mc1} that
$$l(I\widehat{R_{\frak q}}+z/{z•}) < \dim \widehat{R_{\frak q}}/{z}.  \quad \quad \quad  \quad  \quad  \quad  (\dag)$$

Now, we show that there exists a non-negative integer $k$ such that $(IN)^{(n+k)} \subseteq I^n N$ for all integers $n \geq 0$. To do this, it is easy to see that, $(IN)^{(s)} _{\frak p} \subseteq (I^s N)_{\frak p}$ for all $\frak p \in \mAss_R {N}/{IN}$ and for all integers $s\geq 0$. Moreover, if for every $\frak q \in A^* (I,N) \setminus \mAss_R {N}/{IN}$ there exists an integer $k_{\frak q}$ such that
\begin{center}
$(IN)^{(n +k_\frak q)} _{\frak q} \subseteq (I^n N)_{\frak q}$,
\end{center}
then by considering $$k := \max \{ k_{\frak q} : \frak q \in A^* (I,N) \setminus \mAss_R {N}/{IN} \},$$ one easily sees that $(IN)^{(n+k)} \subseteq I^n N$. Since both $A^* (I,N)$ and $\mAss_R {N}/{IN}$ behave well under localization, we may assume by localizing at $\frak q$ that $(R, \frak m)$ is a local ring.

 Now, we use induction on $\dim {N}/{IN•}:=d$. It is clear that $d \geq 1$. Now, if $d=1$, then, as $\Ass_R {N}/{IN} \subseteq \Supp {N}/{IN•}$ and $\frak m \in \Supp {N}/{IN•}$ it follows that the only possible embedded prime of $\Ass_R {N}/{IN}$ is $\frak m$, and so in view of Remark {\rm2.1} we have $$I^s N:_N \langle \frak m \rangle = (IN)^{(s)}$$  for all integers $s \geq 0$. Next, it follows from $(\dag)$ and Proposition {\rm2.10} that the $R[It]$-module $\oplus_{ n\geq 0} \D_{\frak m}(I^n N)$ is finitely generated  and $\depth N> 0$. Hence in view of Lemma {\rm2.2}, the module $\oplus_{ n\geq 0} (I^n N :_N \langle \frak m \rangle)$ is finitely generated over the Rees ring $R[It]$, and so by virtue of Corollary {\rm2.6}, there exists an integer $t$ such that $I^{n+t} N :_N \langle \frak m \rangle \subseteq I^n N$ for all integers $n \geq 0$. Therefore $(IN)^{(n+k)} \subseteq I^n N$, and so the result holds for $d=1$.

We therefore assume, inductively, that $d> 1$ and the result has been proved for smaller values of $d$. If $\frak q \neq \frak m$ and $\frak q \in A^* (I,N)$, then
\begin{center}
$\dim {N_{\frak q}}/{I N_{\frak q}} = \Ht _{{N}/{IN•}} \frak q < \Ht_{{N}/{IN•}} \frak m =\dim {N}/{IN•} = d$.
\end{center}
Hence by induction hypothesis, there exists a non-negative integer $k_{\frak q}$ such that $$(IN)^{(n +k_\frak q)} _{\frak q} \subseteq (I^n N)_{\frak q},$$ for all integers $n \geq 0$. Now, in view of Remark {\rm2.1},
\begin{center}
$\Ass _R {N}/{(I^n N:_N \langle \frak m \rangle)} = \Ass_R {N}/{I^n N} \setminus V(\frak m)$,
\end{center}
it follows that for all $\frak q \in \Ass _R {N}/{(I^n N:_N \langle \frak m \rangle)}$, there exists a non-negative integer $k_{\frak q}$ such that
\begin{center}
$(IN)^{(n +k_\frak q)} _{\frak q} \subseteq (I^n N)_{\frak q} \subseteq ((I^n N)_{\frak q}:_{N_{\frak q}} \langle \frak m R_{\frak q} \rangle)$,
\end{center}
for all integers $n \geq 0$. Hence by considering
\begin{center}
$k:= \max \{ k_{\frak q} : \frak q \in \Ass _R {N}/({I^n N:_N \langle \frak m \rangle}) \}$,
\end{center}
we get $$(IN)^{(n +k)} _{\frak q} \subseteq (I^n N)_{\frak q}:_{N_{\frak q}} \langle\frak m R_{\frak q} \rangle,$$ for all $\frak q \in \Ass _R {N}/({I^n N:_N \langle \frak m \rangle})$ and all integers $n \geq 0$. Therefore, by virtue of the Lemma {\rm2.12}, we have $$(IN)^{(n +k)} \subseteq (I^n N):_{N} \langle \frak m \rangle.$$

 On the other hand, in view of Corollary {\rm2.6}, there exists an integer $s \geq 0$ such that
  $${I^{n+s} N:_N \langle \frak m \rangle} \subseteq I^n N$$ for all integers $n\geq0$. Consequently
  $$(IN)^{(n+k+s)} \subseteq I^{n+s}N:_{N} \langle \frak m \rangle \subseteq I^n N,$$ for all integers $n \geq 0$, and thus the topologies defined by the filtrations $\{ I^n N \}_{n \geq 0}$ and $\{ (IN)^{(n)} \}_{n\geq0}$ are linearly equivalent.
\end{proof}

We are now ready to state and prove the main theorem of this paper, which is a new characterization of locally unmixed modules in terms of comparison of the topologies defined be certain decreasing families of a submodules of finitely generated modules over a commutative Noetherian ring.  One of the implications in the proof of this theorem follows from \cite[Theorem 3.2]{na}.

\begin{thm}
Let $R$ be a Noetherian ring and $N$ a non-zero finitely generated  $R$-module. Then the following conditions are equivalent:

$\rm(i)$ $N$ is locally unmixed.

$\rm(ii)$ For any $N$-proper ideal $I$ of $R$ generated by $\Ht_N I$ elements, the $I$-adic topology is linearly equivalent to the symbolic topology.
\end{thm}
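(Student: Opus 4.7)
The plan is to handle the two implications separately. For (ii) $\Longrightarrow$ (i) I would appeal directly to \cite[Theorem 3.2]{na}, which supplies precisely this direction as a characterization of locally unmixed modules; no additional work is required beyond invoking it. The bulk of the proof therefore concerns (i) $\Longrightarrow$ (ii), which I would deduce from Theorem {\rm2.13} by verifying its hypothesis $E(I,N) = \mAss_R(N/IN)$ for every $N$-proper ideal $I$ generated by $\Ht_N I$ elements.

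Assume $N$ is locally unmixed and fix such an $I$. The inclusion $\mAss_R(N/IN) \subseteq E(I,N)$ is routine, so the task is the reverse inclusion. Take $\frak p \in E(I,N)$. Unwinding the definition through $Q(uR[It,u], N[It,u])$ and reducing to $R_{\frak p}$ produces a prime $\frak r \in \Ass_{\widehat{R_{\frak p}}}\widehat{N_{\frak p}}$ satisfying $\Rad(I\widehat{R_{\frak p}} + \frak r) = \frak p \widehat{R_{\frak p}}$. Because $N_{\frak p}$ is unmixed, we have $\dim \widehat{R_{\frak p}}/\frak r = \dim N_{\frak p} = \Ht_N \frak p$. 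Now the image of $I$ in $\widehat{R_{\frak p}}/\frak r$ is still generated by $\Ht_N I$ elements and is primary to the maximal ideal of that quotient, so Krull's height theorem forces $\Ht_N \frak p \leq \Ht_N I$. On the other hand $\frak p \in \Supp N \cap V(I)$ gives $\Ht_N \frak p \geq \Ht_N I$ by the definition of $\Ht_N I$, so equality holds and $\frak p$ is a minimal element of $\Supp N \cap V(I)$, i.e., $\frak p \in \mAss_R(N/IN)$. With $E(I,N) = \mAss_R(N/IN)$ established, Theorem {\rm2.13} immediately yields the linear equivalence of $\{ I^n N \}_{n \geq 0}$ with $\{ (IN)^{(n)} \}_{n \geq 0}$.

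The main obstacle is the inclusion $E(I,N) \subseteq \mAss_R(N/IN)$ in the (i) $\Longrightarrow$ (ii) direction. The delicate part is coupling the essential-prime data (phrased in the extended Rees ring $R[It,u]$ and then transported through the completion of $R_{\frak p}$) with the generation bound on $I$, so that the two-sided inequality on $\Ht_N \frak p$ collapses to equality. The locally unmixed hypothesis is used exactly to force $\dim \widehat{R_{\frak p}}/\frak r$ to attain its maximum value $\dim N_{\frak p}$, which is what prevents $\frak p$ from being an embedded essential prime and pins it down to a minimal prime of $N/IN$.
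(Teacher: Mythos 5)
Your proof of (ii) $\Longrightarrow$ (i) coincides with the paper's (both simply invoke \cite[Theorem 3.2]{na}), but your argument for (i) $\Longrightarrow$ (ii) takes a genuinely different and, in some respects, more transparent route than the paper's. The paper shows $E(I,N) \subseteq \mAss_R(N/IN)$ by first producing, via \cite[Theorem 2.1]{na}, elements $x_1,\dots,x_n \in I$ with $\Ht_N(x_1,\dots,x_i)=i$, then invoking \cite[Corollary 3.11]{na} to recognize these as an essential sequence on $N$, deducing $\operatorname{egrade}(I,N)=\Ht_N I$, running a Kaplansky-type replacement argument (\cite[Theorem 125]{kap}) to generate $I$ by an essential sequence of that length, and finally applying \cite[Lemma 3.8]{na}. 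Your route bypasses the whole essential-sequence machinery: you go directly to a quintessential-type prime $\frak r \in \Ass_{\widehat{R_{\frak p}}}\widehat{N_{\frak p}}$ with $\Rad(I\widehat{R_{\frak p}}+\frak r) = \frak p\widehat{R_{\frak p}}$, use local unmixedness to force $\dim \widehat{R_{\frak p}}/\frak r = \Ht_N\frak p$, and then pin down $\Ht_N\frak p = \Ht_N I$ by Krull's height theorem on the quotient $\widehat{R_{\frak p}}/\frak r$. This makes the role of unmixedness considerably more visible than in the paper's proof.

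The one place where your argument is thinner than it should be is the sentence ``Unwinding the definition through $Q(uR[It,u], N[It,u])$ and reducing to $R_{\frak p}$ produces a prime $\frak r \in \Ass_{\widehat{R_{\frak p}}}\widehat{N_{\frak p}}$ satisfying $\Rad(I\widehat{R_{\frak p}} + \frak r) = \frak p \widehat{R_{\frak p}}$.'' This is precisely the assertion $E(I,N) \subseteq Q(I,N)$, i.e., that every essential prime of $I$ with respect to $N$ is a quintessential prime. That inclusion is true (for rings it is in McAdam's lecture notes, and the module version follows from the results of Ahn in \cite{Ah}, e.g., the lemmas used in the paper's proof of Theorem 2.13), but it is not a matter of merely unwinding definitions: one must pass from associated primes of the completion of the localized extended Rees module $(N[It,u])_{\frak P}$ down to associated primes of $\widehat{N_{\frak p}}$, and this requires the same kind of technical lemmas the paper invokes from \cite{Ah} elsewhere. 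You should cite that inclusion explicitly rather than presenting it as a formal consequence of the definition of $E(I,N)$. With that citation supplied, your proof is complete and arguably preferable in its directness.
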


\begin{proof}
The implication $\rm(ii)$ $\Longrightarrow$ $\rm(i)$  follows easily from \cite[Theorem 3.2]{na}. In order to prove the conclusion $\rm(i)$ $\Longrightarrow$ $\rm(ii)$, let $I$ be an $N$-proper ideal of $R$ which is generated by $\Ht_{N} I$ elements. Then, in view of Theorem {\rm2.13} it is enough for us to show that $E(I,N)=\mAss_R {N}/{IN}$. Suppose that $\frak p \in E(I,N)$, and we show that $\frak p \in \mAss _R {N}/{IN}$. Let $\Ht_N I:=n$. Then by \cite[Theorem 2.1]{na}, there exist the elements $x_1, \dots, x_n$ in $I$ such that $\Ht_N (x_1,\dots, x_i)=i$ for all $1 \leq i \leq n$. As, in view of \cite[Corollary 3.11]{na}, $x_1,\dots, x_n$ is an essential sequence on $N$, and the fact that ${\rm egrade}(I,N) \leq \Ht_N I$, it follows that ${\rm egrade}(I,N)=n$. Now, analogous to the proof of \cite[Theorem 125]{kap}, it is easy to see that $I$ can be generated by an essential sequence of length n. Therefore by \cite[Lemma 3.8]{na}, we have $\frak p \in \mAss_R {N}/{IN}$,  and so $E(I,N)\subseteq\mAss_R {N}/{IN}$. As the opposite inclusion is obvious, the result follows.
\end{proof}

\begin{center}
{\bf Acknowledgments}
\end{center}
The authors are deeply grateful to the referee for his/her careful
reading of the paper and valuable suggestions. Also, we  would like to thank Professors M.P. Brodmann and S. Goto for their useful comments on Theorem 2.13.


\begin{thebibliography}{99}
\bibitem{Ah}
S.H. Ahn, {\it Asymptotic primes and asymptotic grade on modules}, J. Algebra  {\bf174} (1995), 980-998.
\bibitem{br1}
M. P. Brodmann, {\it Asymptotic stability of $\Ass_R (M/I^nM)$}, Proc. Amer. Math. Soc. {\bf74} (1979), 16-18.
\bibitem{B2}
M.P.  Brodmann, {\it The asymptotic nature of the analytic spread},  Math. Proc. Cambridge Philos. Soc. {\bf 86} (1979),  35-39.
\bibitem{br2}
M.P. Brodmann, {\it Finiteness of ideal transforms}, J. Algebra  {\bf63} (1980), 162-185.
\bibitem{bs}
 M.P. Brodmann and R.Y. Sharp,  {\it Local Cohomology; an Algebraic Introduction with Geometric Applications,} Cambridge University Press, Cambridge, 1998.
\bibitem{bh}
 W. Bruns and J. Herzog, {\it Cohen-Macaulay Rings,} Cambridge University Press, Cambridge, UK, 1998.
\bibitem{et}
E.E. Enochs and M.G. Jenda, {\it Relative Homological Algebra}, Walter de Gruyter,  Berlin, New York, 2000.
\bibitem{kap}
I. Kaplansky, {\it Commutative Rings}, Univ. of Chicago Press, Chicago, 1974.
\bibitem{ka}
D. Katz, {\it Prime divisors, asymptotic $R$-sequences and unmixed local rings}, J. Algebra   {\bf95 } (1985), 59-71.
\bibitem{mc1}
S. McAdam, {\it Asymptotic Prime Divisors},  Lecture Notes in Math. 1023, Springer-Verlag, New York, 1983.
\bibitem{mc2}
S. McAdam, {\it Quintasymptotic primes and four results of Schenzel}, J. Pure Appl. Algebra  {\bf47} (1987),  283-298.
\bibitem{n} M. Nagata, {\it Local Rings}, Interscience, New York, 1961.
\bibitem{na}
R. Naghipour, {\it Locally unmixed modules and ideal topologies}, J. Algebra  {\bf236} (2001), 768-777.
\bibitem{NR}
 D.G. Northcott and D. Rees, {\it Reductions of ideals in local rings},  Proc. Cambridge Philos. Soc. {\bf50} (1954), 145-158.
\bibitem{ra1}
L. J. Ratliff, {\it The topology determined by the symbolic powers of primary ideals}, Comm. Algebra  {\bf13} (1985), 2073-2104.
\bibitem{ra2}
L.J. Ratliff, Jr.,  {\it On asymptotic prime divisors}, Pacific J. Math. {\bf111} (1984), 395-413.
\bibitem{sc3}
 P. Schenzel, {\it Finiteness of relative Rees ring and asymptotic prime divisors,}  Math. Nachr.  {\bf129} (1986), 123-148.
\bibitem{sc2}
P. Schenzel, {\it On the use of local cohomology in algebra and geometry},  Six lectures on commutative algebra (Bellaterra, 1996), 241-292.
\bibitem{sc1}
 P. Schenzel, {\it Independent elements, unmixedness theorems and asymptotic prime divisors,} J. Algebra  {\bf92} (1985), 157-170.
\bibitem{sc5}
 P. Schenzel,  {\it Symbolic powers of prime ideals and their topology}, Proc. Amer. Math. Soc.  {\bf93} (1985), 15-20.
\bibitem{ve1}
J. K. Verma, {\it On ideals whose adic and symbolic topologies are linearly equivalent}, J. Pure Appl. Algebra  {\bf47} (1987), 205-212.


\end{thebibliography}
\end{document}